\newtheorem{thm}{Theorem}
\newtheorem{lem}{Lemma}
\newtheorem{cor}{Corollary}
\title{2-recognizeable classes of Leibniz Algebras}
\author{Tiffany Burch$^{\MakeLowercase{a}}$, Meredith Harris$^{\MakeLowercase{a}}$, Allison McAlister$^{\MakeLowercase{a}}$, Elyse Rogers$^{\MakeLowercase{b}}$, Ernie Stitzinger$^{\MakeLowercase{a}}$, S. McKay Sullivan$^{\MakeLowercase{a}}$}
\date{}                                           
\begin{document}
\maketitle
\begin{center}
$^a$Department of Mathematics, North Carolina State University,
\\
P.O. Box 8205, Raleigh, North Carolina 27695-8205
\\
\emph{Email address}: tmmyers@ncsu.edu,maharr11@ncsu.edu, armcalis@ncsu.edu, stitz@ncsu.edu, smsulli4@ncsu.edu
\\
$^b$ Surrey University
\\
\emph{Email address}: esrogers@ncsu.edu
\end{center}

ABSTRACT

\vspace{5mm}

We show that for fields that are of characteristic 0 or algebraically closed of characteristic greater than 5, that certain classes of Leibniz algebras are 2-recognizeable. These classes are solvable, strongly solvable and supersolvable. These same results hold in Lie algebras and in general for groups. 

\vspace{5mm}

\emph{Key Words}: 2-recognizeable, strongly solvable, supersolvable, Leibniz algebras

\vspace{5mm}

I. PRELIMINARIES

\vspace{5mm}

A property of algebras is called n-recognizeable if whenever all the n generated subalgebras of algebra $L$ have the property, then $L$ also has the property. An analogous definition holds for classes of groups. In Lie algebras, nilpotency is 2-recognizeable due to Engel's theorem and the same holds for Leibniz algebras. For Lie algebras, solvability, strong solvability and supersolvability are 2-recognizeable when they are taken over a field of characteristic 0 or an algebraically closed field of characteristic greater than 5. These results are shown in [7] and [12] using different methods. The purpose of this work is to extend these results to Leibniz algebras. Corresponding results in group theory are shown in [8] and [9].

\vspace{5mm}

The definition of Leibniz algebra can be given in terms of the left multiplications being derivations. A theme in this work is that assumptions will be given in terms of the left multiplications. Thus, that nilpotency is 2-recognizeable in Leibniz algebras  follows from all left multiplications being nilpotent, Engel's theorem. This result, shown in several places, can be cast as in Jacobson's refinement to Engel's theorem for Lie algebras, see [6], a result that we use.

\vspace{5mm}

 For the supersolvable case, we need the Leibniz algebra version of a Lie algebra result due to Barnes and Newell [2]. This result is a characterization of supersolvability in terms of conditions on the left multiplications and strong solvability. Barnes has extended his Lie algebra result to Leibniz algebras [4], where his conditions are on both left and right multiplications as well as strong solvability. Following the theme in this paper, we will obtain a result where the conditions are only on left multiplications and strong solvability, the latter of which also can be given in terms of left multiplications. Another generalization of Lie algebras, Malcev algebras, also has this type of result [10], [13].

\vspace{5mm}

Let $L$ be a Leibniz algebra and $x \in  L$. We denote left and right multiplication of $L$ by $x$ as $L_x$ and $R_x$. Let $Leib(L)$ be the span of the squares of the elements of $L$. It is an ideal in $L$ [3]. If $S$ is a subset of $L$, then $Z_L^l(S)$ and $Z_L(S)$ will denote the left centralizer and centralizer of $S$ in $L$. If $S$ is an ideal, then both these centralizers are also ideals in $L$.  $\Phi(L)$ will denote the Frattini ideal of $L$. Results on Frattini ideals of Leibniz algebras are found in [5].

\vspace{5mm}

II. SOLVABLE ALGEBRAS

\vspace{5mm}

We first consider solvability for Leibniz algebras. The result is field dependent. For many fields, solvability is 2-recognizeable in Leibniz algebras. In fact, the result in this case is the same as the Lie algebra result [12].

\vspace{5mm}

\begin{thm}\label{thm1}
Solvability is 2-recognizeable for Leibniz algebras over a field of characteristic 0 and over algebraically closed fields of characteristic greater than 5.
\end{thm}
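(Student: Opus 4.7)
The plan is to reduce the Leibniz statement directly to the known Lie algebra statement by passing to the canonical Lie quotient $L/Leib(L)$. Assume that every 2-generated subalgebra of $L$ is solvable, and write $\pi\colon L \to L/Leib(L)$ for the quotient map. Given any two cosets $\bar a, \bar b$ in $L/Leib(L)$, choose lifts $a,b \in L$; then $\pi(\langle a,b\rangle) = \langle \bar a,\bar b\rangle$ since the image of a generating set generates the image. Because $\langle a,b\rangle$ is 2-generated in $L$ it is solvable by hypothesis, and a homomorphic image of a solvable algebra is solvable, so $\langle \bar a,\bar b\rangle$ is solvable. Thus every 2-generated subalgebra of the \emph{Lie} algebra $L/Leib(L)$ is solvable, and the Lie algebra result of [12], available under the stated hypotheses on the field, gives that $L/Leib(L)$ is solvable.

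Next I would dispense with the kernel $Leib(L)$. Applying the left Leibniz identity to $[x,[x,y]]$ gives $[x,[x,y]] = [[x,x],y] + [x,[x,y]]$, hence $[x^2,y]=0$ for all $x,y\in L$. Every spanning generator $x^2$ of $Leib(L)$ therefore lies in the left annihilator of $L$, so $[Leib(L),L]=0$; in particular $Leib(L)$ is abelian, hence solvable. Finally, the derived-series argument that an extension of a solvable ideal by a solvable quotient is solvable transfers verbatim from Lie to Leibniz algebras, so combining the two steps yields that $L$ itself is solvable.

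There is essentially no novel obstacle beyond the Lie algebra input supplied by [12]; the characteristic hypotheses enter only through that theorem, which is why the Leibniz theorem must carry exactly the same field restrictions as its Lie algebra counterpart. The three Leibniz-specific ingredients — lifting generators through $\pi$, the annihilator identity $[x^2,y]=0$, and the solvable-by-solvable extension property — are each formal, so the proof amounts to packaging the Lie algebra 2-recognizability result along the exact sequence $0 \to Leib(L) \to L \to L/Leib(L) \to 0$.
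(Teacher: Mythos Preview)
Your proof is correct but follows a different route from the paper. The paper argues by minimal counterexample: a minimal $L$ failing the conclusion must be simple (otherwise any proper nonzero ideal $N$ and the quotient $L/N$ both inherit the hypothesis and are solvable by minimality, forcing $L$ solvable), and since simple Leibniz algebras are necessarily Lie algebras, one is reduced to the Lie result of [12]. You instead single out the specific ideal $Leib(L)$, push the hypothesis down to the Lie quotient $L/Leib(L)$ to invoke [12] there, and handle the kernel by the identity $[x^2,y]=0$ making $Leib(L)$ abelian. Your argument is more constructive and avoids induction, and it makes explicit the structural reason the Leibniz case costs nothing extra; the paper's argument is shorter and more schematic, relying only on the fact that simplicity forces $Leib(L)=0$ rather than on any computation with the Leibniz identity. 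Both reductions consume the field hypotheses only through the citation of [12].
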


\begin{proof}
Let $L$ be a minimal counterexample. Simple Leibniz algebras are Lie algebras and there are no Lie algebra counterexamples [12].  Thus $L$ is not simple. If $N$ is a proper ideal of $L$, then the hypothesis holds in $N$ and $L/N$. Hence both are solvable and the result holds.
\end{proof}

\vspace{5mm}

III. STRONGLY SOLVABLE  ALGEBRAS

\vspace{5mm}

A Lie algebra whose derived algebra is nilpotent is called strongly solvable. Strong solvability for Lie algebras is 2-recognizeable in the class of solvable Lie algebras, for algebras of characteristic 0 and for algebras over an algebraically closed field of characteristic greater than 5 as has been shown by several methods [7] and [12]. In this section we extend these results to Leibniz algebras. As in Lie theory, we show the result in the class of solvable Leibniz algebras and extend it to the other cases using Theorem \ref{thm1}. The method used in [12] for the Lie cases is to show that the algebra is strongly solvable if and only if $e_n(x,y)=0$ for all $x \mbox{ and }y$ and almost all $n$ where $e_n(x,y)=L^n_{xy}(x)$.  To cope with the lack of symmetry for Leibniz algebras, we also consider $f_n(x,y)=L^n_{xy}(y)$.

\begin{thm}\label{thm2}
Let $L$ be a solvable Leibniz algebra. Then $L$ is strongly solvable if and only if $e_n(x,y)=0$ and $f_n(x,y)=0$ for almost all $n$ and all $x,y \in L$.
\end{thm}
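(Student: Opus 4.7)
The plan is to treat the two directions separately. For the forward direction, assume $L'$ is Leibniz-nilpotent, and let its lower central series $B^1 = L'$, $B^{k+1} = L' \cdot B^k$ terminate at $B^N = 0$. For $x, y \in L$, set $z = xy \in L'$. Since $L'$ is an ideal, $L_z(x) = zx \in L' = B^1$, and inductively $L_z^{k+1}(x) = z \cdot L_z^k(x) \in L' \cdot B^k = B^{k+1}$; the same induction applies with $y$ in place of $x$. Hence $e_N(x, y) = f_N(x, y) = 0$.

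For the converse, pass to the Lie quotient $\bar L = L / Leib(L)$, which is a solvable Lie algebra and to which the $e_n, f_n$ conditions clearly descend (since $L_{xy}^n(x) = 0$ in $L$ forces $L_{\bar x \bar y}^n(\bar x) = 0$ in $\bar L$). Invoking the Lie-algebra theorem of [12] yields that $\bar L$ is strongly solvable, so $\bar L' = L' / Leib(L)$ is Lie-nilpotent. Writing $(L')^k$ for the Leibniz lower central series of $L'$, this translates to $(L')^m \subseteq Leib(L)$ for some $m$.

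To convert this inclusion into the vanishing $(L')^M = 0$, I would exploit the identity $(uu)v = u(uv) - u(uv) = 0$, obtained by setting $x = y = u$ in the left Leibniz identity, which gives $L_u = 0$ on all of $L$ for every $u \in Leib(L)$. Using that $L_{xy} = [L_x, L_y]$ in $\mathfrak{gl}(L)$, the set $\{L_z : z \in L'\}$ spans a Lie subalgebra of $\mathfrak{gl}(Leib(L))$. Combining the $e_n, f_n$ hypothesis with multilinearization in $x, y$ over the infinite ground field, each such operator should be shown nilpotent on $Leib(L)$; Jacobson's refinement of Engel's theorem, cited in the preliminaries, then forces the associative algebra these operators generate to act nilpotently, yielding a uniform $K$ with $L_{z_1} \cdots L_{z_K} = 0$ on $Leib(L)$. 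Combined with $(L')^m \subseteq Leib(L)$, this gives $(L')^{m + K} = 0$, so $L'$ is nilpotent.

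The main obstacle is this last promotion: going from the pointwise statement that $L_{xy}$ eventually kills $x$ and $y$ to nilpotency of $L_z|_{Leib(L)}$ for each $z \in L'$. I expect this to mirror the Lie-algebra argument in [12], carefully tracking the left-right asymmetry of Leibniz products, which is precisely why both $e_n$ and $f_n$ are needed in the hypothesis rather than one alone.
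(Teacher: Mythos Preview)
Your route is genuinely different from the paper's. The paper argues by minimal counterexample: such an $L$ is shown to be primitive with unique minimal ideal $A=Leib(L)$ and, via Lemma~2, a \emph{unique} complement $B$. One then runs a Lie-set argument on $S=\{xy:x,y\in L\}\cup L^{(2)}$: the only delicate case is $\langle x,y\rangle+A=L$ with $\langle x,y\rangle\neq L$, and there the trick is to replace $y$ by $y+a$ for some $0\neq a\in A$; uniqueness of the complement forces $\langle x,y+a\rangle=L$, so Lemma~1 makes $L_{x(y+a)}$ nilpotent, and since $xa\in L^{(2)}$ one recovers nilpotency of $L_{xy}$.

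Your reduction through $\bar L=L/Leib(L)$ is a legitimate alternative strategy, but the gap you flag is real and the patch you propose does not work. Multilinearizing $e_n(x,y)=L_{xy}^n(x)$ in $x,y$ mixes the operator $L_{xy}$ with the argument $x$ and does not isolate $L_{xy}^n(v)$ for an arbitrary $v\in Leib(L)$; moreover Theorem~2 carries no hypothesis on the base field, so invoking an infinite field would weaken the result. The missing step can in fact be done directly and without any field assumption: for $v\in Leib(L)$ one has $(x+v)y=xy$ because $Leib(L)\,L=0$, hence $e_n(x+v,y)=L_{xy}^n(x+v)=L_{xy}^n(x)+L_{xy}^n(v)$, and subtracting $e_n(x,y)$ yields $L_{xy}^n(v)=0$ for large $n$. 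Thus every $L_{xy}$ is nilpotent on $Leib(L)$, and your Jacobson--Engel step then finishes the argument. As written, however, the proposal stops short of a proof precisely at the point where the genuinely Leibniz work has to be done.
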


We show two preliminary results first.

\vspace{5mm}

\begin{lem}\label{lem1}
Suppose that $L$ is generated by $x$ and $y$. Suppose that $e_n(x,y)=f_n(x,y)=0$ for almost all $n$. Then $L_{xy}$ is nilpotent acting on $L$.
\end{lem}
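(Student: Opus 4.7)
The plan is to use that in a (left) Leibniz algebra every left multiplication is a derivation, so the operator $D := L_{xy}$ is a derivation of $L$. The hypothesis $e_n(x,y)=f_n(x,y)=0$ for almost all $n$ says precisely that $D$ acts nilpotently on the two generators $x$ and $y$ of $L$.

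The key step is the classical Leibniz-rule observation for derivations: if $D^m(a) = 0$ and $D^n(b) = 0$, then for any $k \ge m+n-1$,
\begin{equation*}
D^k([a,b]) \;=\; \sum_{i+j=k} \binom{k}{i}\,[D^i(a),\, D^j(b)] \;=\; 0,
\end{equation*}
since in each term of the sum either $i \ge m$ or $j \ge n$. Consequently the set
\begin{equation*}
V \;=\; \{\,z \in L : D^N(z) = 0 \text{ for some } N \ge 0\,\}
\end{equation*}
is a subspace of $L$ closed under the bracket, hence a subalgebra. By hypothesis $x,y \in V$, and since $L$ is generated (as a Leibniz algebra) by $x$ and $y$, we conclude $V = L$; that is, $D$ acts locally nilpotently on all of $L$.

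Finally, because $L$ is finite-dimensional, local nilpotency upgrades to uniform nilpotency: choose a basis $z_1,\dots,z_r$ of $L$, pick $N_i$ with $D^{N_i}(z_i)=0$, and set $N = \max_i N_i$. Then $D^N$ annihilates every basis vector, so $D^N = 0$ on $L$, i.e., $L_{xy}$ is nilpotent as an operator on $L$.

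I do not expect a serious obstacle. The only points needing attention are (i) invoking the left Leibniz identity to see that $L_{xy}$ is a derivation, which is the feature that lets the one-variable hypotheses on $x$ and $y$ propagate to all of $L$, and (ii) the subalgebra-closure computation above, which is essentially the Fitting-zero-component argument for a derivation and goes through verbatim in the Leibniz setting.
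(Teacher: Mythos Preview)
Your proof is correct and follows essentially the same approach as the paper: both use that $L_{xy}$ is a derivation, apply the iterated Leibniz formula to show that a high enough power of $D=L_{xy}$ kills any monomial in $x$ and $y$, and then invoke finite-dimensionality to get a uniform nilpotency index. Your packaging via the subalgebra $V=\{z: D^{N}z=0\text{ for some }N\}$ is a clean rephrasing of exactly the same computation the paper carries out on left-normed monomials.
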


\begin{proof}
Let $u \in L$. Then $u$ is a linear combination of elements of the form $z=z_s(\hdots (z_2z_1) \hdots)$  where each $z_j=x$ or $y$. Since $L_{xy}$ is a derivation,
 $$L^t_{xy}(z)= \sum L^{i_s}_{xy}(z_s)(L^{i_{s-1}}_{xy}(z_{s-1}) \hdots (L^{i_{2}}_{xy}(z_{2})L^{i_{1}}_{xy}(z_{1}))\hdots) \mbox{ where } i_s+\hdots+i_1=t.$$ For $t=(n-1)s+1$, at least one of the terms is 0. Since $L$ is finite dimensional, $L$ has a basis made up of terms of the form $z$ and $L_{xy}$ to  a power takes each to 0. Thus $L_{xy}$ is nilpotent on $L$.
\end{proof}

\vspace{5mm}

For $x \in L$, let $L_0(x)$ and $L_1(x)$ be the Fitting 0 and 1 component of $L_x$ acting on $L$. A solvable Leibniz algebra is called primitive if it has a unique minimal ideal $A$ which is its own left centralizer and is complemented. In Lie algebras, all complements of $A$ are conjugate. If $L$ is Leibniz but not Lie, it is shown in [3] that the complement is unique. In our situation, there is a short, natural proof of this result which follows.

\vspace{5mm}

\begin{lem}\label{lem2}
Let $L$ be a solvable, primitive, Leibniz algebra, which is not a Lie algebra, such that $L$ is not strongly solvable but all proper subalgebras and quotients of $L$ are strongly solvable. Let $A$ be the unique minimal ideal of $L$. Then the complement of $A$ in $L$ is unique.
\end{lem}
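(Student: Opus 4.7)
The plan is to parametrise complements of $A$ by a linear cocycle and then rule out any nonzero cocycle by a short induction along the derived series of $B$. Fix one complement, so $L = A \oplus B$.

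First I would unpack the consequences of $L$ being non-Lie. Since $\mathrm{Leib}(L)$ is a nonzero ideal, it contains the unique minimal ideal, so $A \subseteq \mathrm{Leib}(L)$. The left Leibniz identity applied to $[y,[y,x]]$ shows $L_c = 0$ for every $c \in \mathrm{Leib}(L)$, hence $\mathrm{Leib}(L) \subseteq Z_L^l(L) \subseteq Z_L^l(A) = A$, and so $A = \mathrm{Leib}(L) = Z_L^l(L)$. In particular $L_a = 0$ for $a \in A$, so $A$ is abelian and every left-$L$-invariant subspace of $A$ is automatically an ideal of $L$. Minimality of $A$ therefore makes $A$ an irreducible left $L$-module; as the action factors through $L/A$, and $L/\mathrm{Leib}(L) = L/A$ is Lie, the complement $B \cong L/A$ is a Lie subalgebra and $A$ is $B$-irreducible.

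Now let $B'$ be any second complement and write $B' = \{b + \phi(b) : b \in B\}$ for a unique linear map $\phi \colon B \to A$. Expanding brackets and using $L_a = 0$ collapses three of the four cross terms to zero, so
\[
[b_1 + \phi(b_1),\, b_2 + \phi(b_2)] = [b_1, b_2] + L_{b_1}\phi(b_2),
\]
and $B'$ is a subalgebra iff $\phi$ satisfies the one-sided cocycle identity $\phi([b_1, b_2]) = L_{b_1}\phi(b_2)$. The task is to show $\phi \equiv 0$. Let $V_k := \phi(B^{(k)}) \subseteq A$. Since $B^{(k)}$ is a two-sided ideal of $B$, the cocycle identity gives $L_b V_k = \phi([b, B^{(k)}]) \subseteq V_k$, so each $V_k$ is a $B$-submodule of $A$; irreducibility forces $V_k \in \{0, A\}$. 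The same identity, applied to generators of $B^{(k+1)} = [B^{(k)}, B^{(k)}]$, yields the key equality $V_{k+1} = L_{B^{(k)}} V_k$.

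The induction is now immediate. Assuming for contradiction that $V_0 = \phi(B) = A$, suppose $V_k = A$. Then $V_{k+1} = L_{B^{(k)}} A$ is a $B$-submodule of $A$ and is therefore $0$ or $A$; but if it were $0$ then $B^{(k)} \subseteq Z_L^l(A) = A$, giving $B^{(k)} \subseteq A \cap B = 0$ and hence $V_k = \phi(0) = 0$, contradicting $V_k = A$. So $V_{k+1} = A$ and the induction proceeds. Solvability of $L$ forces $B^{(n)} = 0$ for some $n$, whence $V_n = 0 \neq A$, the desired contradiction. Therefore $V_0 = 0$, $\phi \equiv 0$, and $B' = B$. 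The main obstacle is securing the structural identity $A = \mathrm{Leib}(L) = Z_L^l(L)$: it is what makes $L_a = 0$ for $a \in A$, collapses the bracket expansion to the clean one-sided cocycle identity, and guarantees that each $V_k$ really is a $B$-submodule of $A$. The minimal-counterexample hypotheses of the lemma are not actually used in this argument; only primitivity, solvability, and the assumption that $L$ is not Lie enter.
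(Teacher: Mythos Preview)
Your argument is correct and takes a genuinely different route. The paper works via the Fitting decomposition of left multiplications by elements of $L^2$: since proper quotients are strongly solvable, $L_1(b)\subseteq A$ for each $b\in B\cap L^2$, so $L_0(b)$ supplements $A$; since $L$ itself is not strongly solvable, some such $L_b$ is non-nilpotent, and the paper identifies each complement with a Fitting null component $L_0(b)$. Writing the corresponding element of a second complement as $c=b+a$ with $a\in A=\mathrm{Leib}(L)$ gives $L_c=L_b$ (because $L_a=0$), hence $B=L_0(b)=L_0(c)=C$. Both minimal-counterexample hypotheses are used essentially in this argument. Your cocycle descent, by contrast, needs only primitivity, solvability, and the non-Lie assumption to secure $A=\mathrm{Leib}(L)=Z_L^l(L)$; the one-sided identity $\phi([b_1,b_2])=L_{b_1}\phi(b_2)$ then drives an induction down the derived series of $B$ using nothing but irreducibility of $A$ as a left $B$-module. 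As you observe, this actually recovers the general uniqueness result the paper attributes to Barnes~[3], not just the special case at hand. The paper's proof is shorter and stays inside the Fitting/left-multiplication framework that pervades the article; yours is self-contained and strictly more general.
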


\begin{proof}
Let $B$ be a complement of $A$ in $L$ and $b \in B \cap L^2$. Under the conditions $L_1(b)\subset A$, hence $L_0(b)$ is a supplement of $A$ in $L$. If $L_0(b) =L$ for all such $b$, then $L^2$ is nilpotent, a contradiction. Hence $B=L_0(b)$ for some $b$. If $C$ is another complement to $A$ in $L$, then $C=L_0(c)$ for some $c \in C \cap L^2$. Then $c=b+a$ for some $a \in A$ and $b \in B \cap L_2$. Since $a \in A=Leib(L), L_b=L_c$. Hence $B=L_0(b)=L_0(c)=C$.
\end{proof}

\vspace{5mm}

\begin{proof}[Proof of Theorem 2]
If $L^{2}$ is nilpotent, then the condition clearly holds. For the converse, suppose that $L$ is a minimal counterexample. Let $A$ be a minimal ideal of $L$. The hypothesis holds in $L/A$, hence $(L/A)^2$ is nilpotent and each $L_{xy}$ to some power takes $L$ to $A$. If $B$ is another minimal ideal of $L$, then $L^2=(L/A\cap B)^2$ is nilpotent, a contradiction. Hence $A$ is the unique minimal ideal in $L$. Furthermore, if $A \in \Phi(L)$ , then $(L/A)^2$ nilpotent, gives that $L^2$ is nilpotent, a contradiction. Hence $\Phi(L)=0$ and $A$ is complemented in $L$ by a subalgebra $B$ [5]. Thus $L$ is primitive. Since $A$ is unique, $A$ is its own left centralizer. We may assume that $L$ is not a Lie algebra for the result is known in that case [12].  Then  $A = Leib(L)$ [5] . Then $B$ is the unique complement of $A$.
\end{proof}

\vspace{5mm}

The set $S = \{xy | x,y \in  L \} \cup L^{(2)}$ is a Lie set whose span is $L^2$ where $L^{(2)}=[L^2,L^2]$. We will show that $L_s$ is nilpotent on $L$ for all $s$ in $S$. Then, using [6], $L^2$ is nilpotent.  Note that $L_s$ is nilpotent for all $s \in L^{(2)}$ since this ideal of $L$ is nilpotent by induction on $L^2$. Let $U$ be the subalgebra generated by $x,y \in L$. If $U=L$, then $L_{xy}$ is nilpotent on $L$ by the lemma. If $U+A \neq L$, then by induction $U^2 \subset (U+A)^2$ is nilpotent. Then a power of $L_{xy}$ takes $L$ to $A$ and a further power takes $L$ to 0. Hence $L_{xy}$ is nilpotent on $L$. 

\vspace{5mm}

Suppose that $L=A+U$. Since we have taken care of the case that $U=L$ and $A \cap U$ is an ideal of $L$, $U$ is a complement of $A$ in $L$. Let $a \in A$, $a \neq  0$ and take $V$ to be the subalgebra generated by $x$ and $y+a$. Since $x, y=(y+a)-a \in V+A$, it follows that $L=A+U=A+V$. Then $V \cap A= 0$ or $A$ since $V \cap A$ is an ideal of $L$ contained in the minimal ideal $A$. If $V \cap A= 0$, then using Lemma \ref{lem2}, $U=V$ since $U$ is the unique complement of $A$ in $L$. Since $y$ and $y+a$ are in $V=U$, $a$ is also, a contradiction. Hence $V=L$. By the lemma, $L_{x(y+a)}$ acts nilpotently on $L$. For each $s$ in the Lie set $T=(x(y+a)) \cup L^{(2)}$ has $L_s$ nilpotent on $L$. Since $xy=x(y+a)-xa$ is in the span of $T$, it follows that $L_{xy}$ is nilpotent on $L$. Now every $s$ in the original Lie set $S$ has $L_s$ acting nilpotently on $L$. Hence $L^2$ acts nilpotently on $L$ and $L^2$ is nilpotent.

\vspace{5mm}

\begin{cor}\label{cor1}
In solvable Leibniz algebras, strong solvability is 2-recognizeable.
\end{cor}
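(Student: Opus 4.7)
The plan is to deduce the corollary directly from Theorem~\ref{thm2} by exploiting the local character of the conditions $e_n(x,y)=0$ and $f_n(x,y)=0$. These conditions depend only on the two specified elements and on brackets formed from them, so they are intrinsic to the 2-generated subalgebra $\langle x,y\rangle$. This is precisely the feature of the characterization in Theorem~\ref{thm2} that makes 2-recognizability accessible: the defining criterion for strong solvability has been written in a two-variable form.

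First I would fix $x,y$ in a solvable Leibniz algebra $L$ whose 2-generated subalgebras are all strongly solvable, and consider $U=\langle x,y\rangle$. Being a subalgebra of a solvable algebra, $U$ is itself solvable, and by hypothesis $U$ is strongly solvable. Invoking the forward implication of Theorem~\ref{thm2} inside $U$ yields $e_n(x,y)=0$ and $f_n(x,y)=0$ for almost all $n$. Since each iterated expression $L_{xy}^n(x)$ and $L_{xy}^n(y)$ is computed the same way in $U$ as in $L$ (every intermediate product lies in $U \subseteq L$), these equalities transfer to $L$. As $x$ and $y$ were arbitrary, the reverse implication of Theorem~\ref{thm2}, applied to the solvable algebra $L$, delivers strong solvability of $L$.

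The main obstacle in this corollary is not in the present argument but was already overcome in Theorem~\ref{thm2}, whose two-variable characterization is tailor-made to pass between $L$ and its 2-generated subalgebras. One technical point worth recording is that Theorem~\ref{thm2} requires its algebra to be solvable; in the corollary this is built into the hypothesis, while an unconditional 2-recognizability statement for strong solvability would first need Theorem~\ref{thm1} (under the stated field hypotheses) to reduce to the solvable setting, exactly paralleling the Lie-algebra path of [7] and [12].
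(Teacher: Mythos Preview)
Your proof is correct and follows essentially the same approach as the paper's: pass to the 2-generated subalgebra, use strong solvability there to get the vanishing of $e_n(x,y)$ and $f_n(x,y)$, and then apply the converse direction of Theorem~\ref{thm2} to $L$. The paper's version differs only cosmetically, arguing directly that $L_{xy}$ acts nilpotently on $H=\langle x,y\rangle$ (since $xy\in H^2$ and $H^2$ is nilpotent) rather than formally citing the forward implication of Theorem~\ref{thm2}.
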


\begin{proof}
Let $x$ and $y$ be in $L$. The subalgebra, $H$,  generated by $x$ and $y$, has  $H^2$ nilpotent. Thus $L_{xy}$ acts nilpotently on $H$. Hence $e_n(x,y)$ and $f_n(x,y)$ are 0 when $n \geq \mbox{dim }H$. Therefore the conditions of the theorem are satisfied and $L^2$ is nilpotent.
\end{proof}

\vspace{5mm}

\vspace{5mm}

This corollary can be extended to larger classes. For if the base field has characteristic 0 or if it is algebraically closed, then solvability of all two generated subalgebras yields solvability of the algebra. Therefore, if all two generated subalgebras have nilpotent derived algebra, then all two generated subalgebra are solvable and $L$ is solvable. Therefore, for the fields considered, if all two generated subalgebras are strongly solvable then the algebra is solvable and the above corollary gives that the algebra is strongly solvable.

\vspace{5mm}

\begin{thm}\label{thm3}
For Leibniz algebras over a field of characteristic 0 or an algebraically closed field of characteristic greater than 5, then strong solvability is 2-recognizeable.
\end{thm}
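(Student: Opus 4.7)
The plan is to reduce Theorem \ref{thm3} to the combination of Theorem \ref{thm1} and Corollary \ref{cor1}, exactly as foreshadowed in the paragraph preceding the statement. I would let $L$ be a Leibniz algebra over a field $F$ satisfying the hypothesis, and assume that every $2$-generated subalgebra of $L$ is strongly solvable. The first observation is that strong solvability is a property stronger than solvability (the derived algebra is nilpotent, hence solvable, hence the whole algebra is solvable), so each such $2$-generated subalgebra is in particular solvable.

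Next, I would invoke Theorem \ref{thm1}: since $F$ is of characteristic $0$ or algebraically closed of characteristic greater than $5$, solvability is $2$-recognizeable in Leibniz algebras over $F$. Because every $2$-generated subalgebra of $L$ is solvable, this theorem yields that $L$ itself is solvable. At this point the hypothesis on $L$ has been upgraded: $L$ is a solvable Leibniz algebra whose $2$-generated subalgebras are all strongly solvable.

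Finally, I would apply Corollary \ref{cor1}, which states that strong solvability is $2$-recognizeable within the class of solvable Leibniz algebras, over any field. This immediately gives that $L$ is strongly solvable, completing the proof.

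There is no real obstacle here: the theorem is a clean composition of two previously established facts, with Theorem \ref{thm1} supplying the global solvability needed as input to Corollary \ref{cor1}, and Corollary \ref{cor1} supplying the strong solvability conclusion. The only point that merits an explicit remark is the trivial but essential implication ``strongly solvable $\Rightarrow$ solvable,'' which allows Theorem \ref{thm1} to be triggered from the hypothesis. The field assumption is used exactly once, precisely at the step where Theorem \ref{thm1} is invoked, while Corollary \ref{cor1} is used without any field restriction.
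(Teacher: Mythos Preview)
Your proposal is correct and follows essentially the same approach as the paper: observe that strongly solvable $2$-generated subalgebras are in particular solvable, apply Theorem~\ref{thm1} to conclude $L$ is solvable, and then invoke Corollary~\ref{cor1} to obtain strong solvability of $L$. This is exactly the argument the paper gives in the paragraph preceding Theorem~\ref{thm3}.
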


\vspace{5mm}

IV. SUPERSOLVABLE  ALGEBRAS

\vspace{5mm}

Let $a\in L$. Suppose that $L_a$ has the minimum polynomial $m(x)=\pi_1(x)^{d_1} \dots \pi_k(x)^{d_k}$ where $\pi_i(x)$ is linear for $1 \leq s$ and nonlinear for $i >s$. Then $\pi_i(x)=(x-c_i)^{d_i}$ for $i \leq s$. Let $L^{*}_{c_i}$ for $i \leq s$ and $L^{*}_{\pi_i}$ for $i > s$ be components in the primary decomposition of $L_a$ acting on $L$ and let $L^{*}_0(a)=L^{*}_{c_1} \bigoplus \hdots \bigoplus L^{*}_{c_s} $ and $L^{*}_1(a)=L^{*}_{\pi_{s+1}} \bigoplus \hdots \bigoplus L^{*}_{\pi_t}$ and let $m_1(x)=(x-c_1)\hdots(x-c_s)$. Both $L^{*}_0(a)$ and $L^{*}_1(a)$ are invariant under $L_a$ by the general theory of linear transformations. We also see that $m_1(L_a)$ acts nilpotently on $L^{*}_0(a)$ since $m_1(L_a)^{d_i}=(L_a-c_1I)^{d_1} \hdots (L_a-c_sI)^{d_s}y=0$ for $y \in L^{*}_{c_i}$ since the terms on the right hand side of the equation commute. Hence $m_1(L_a)^{max(d_i)}=0$ on $L^{*}_0(a)$. Also $\mbox{gcd}(m_1(x), \pi_i(x)^{d_i})=1=m_1(x)u(x)+\pi_i(x)v(x)$ for some $u(x), v(x)$. Let $v \in L^{*}_{\pi_i}$  Then $v=Iv=(m_1(L_a)u(L_a)+\pi_i(L_a)^{d_i}v(L_a))v=m_1(L_a)u(L_a)v$. If $m_1(L_a)v=0$ , then $Iv=v=0$. Hence $v=0$. Therefore $m_1(L_a)$ is non-singular on each $L^{*}_{\pi_i}$ for $i>s$ and $m_1(L_a)$ is non-singular on $L^{*}_i(a)$. 
Let $x \in L^{*}_{c_i}$ and $y \in L^{*}_{c_j}$. Then, using the Leibniz identity for derivations, $L_a-((c_i+c_j)I)^{d_i+d_j}(xy)=\sum_{k=0}^{d_i+d_j} {d_i+d_j \choose k} (L_a-c_iI)^kx(L_a-c_jI)^{d_i+d_j-k}y=0$ giving that $xy \in L_{c_i+c_j}$. Therefore $L^{*}_0$ is a subalgebra. We record this result as

\vspace{5mm}

\begin{lem}\label{lem3}
$L^{*}_{0}(a)$ is a subalgebra of $L$ for all $a \in L$.
\end{lem}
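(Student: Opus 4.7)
The plan is essentially to clean up and formalize the binomial-style computation already sketched in the paragraph preceding the statement. The key input is that in any Leibniz algebra $L$, the left multiplication $L_a$ is a derivation, so by induction on $N$ one has the standard Leibniz/binomial identity
\[
(L_a - \lambda I)^N(xy) \;=\; \sum_{k=0}^{N} \binom{N}{k}\bigl[(L_a - \alpha I)^k x\bigr]\bigl[(L_a - \beta I)^{N-k} y\bigr]
\]
whenever $\alpha + \beta = \lambda$, using only that $L_a$ is a derivation and that the operators $L_a - \alpha I$ and $L_a - \beta I$ commute with themselves and with each other on the relevant subspaces.

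By bilinearity of the product, it suffices to show that $xy \in L^{*}_0(a)$ whenever $x \in L^{*}_{c_i}$ and $y \in L^{*}_{c_j}$ with $i,j \leq s$. Setting $\alpha = c_i$, $\beta = c_j$, $\lambda = c_i + c_j$, and $N = d_i + d_j$ in the identity above, every summand is zero: if $k \geq d_i$ then $(L_a - c_i I)^k x = 0$ by definition of $L^{*}_{c_i}$, and otherwise $N - k \geq d_j + 1 > d_j$ so $(L_a - c_j I)^{N-k} y = 0$. Hence $(L_a - (c_i + c_j)I)^{d_i+d_j}(xy) = 0$, which places $xy$ in the generalized eigenspace of $L_a$ for the scalar $c_i + c_j$.

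It remains to argue that this generalized eigenspace actually sits inside $L^{*}_0(a)$. The eigenvalues appearing in the primary decomposition are the roots of the minimum polynomial $m(x)$. Since $c_i + c_j$ lies in the base field, it cannot be a root of any of the irreducible nonlinear factors $\pi_{s+1},\dots,\pi_t$; so either $c_i + c_j = c_\ell$ for some $\ell \leq s$, in which case $xy \in L^{*}_{c_\ell} \subseteq L^{*}_0(a)$, or $c_i + c_j$ is not a root of $m(x)$ at all, in which case $(L_a - (c_i+c_j)I)$ is invertible on $L$, its power is still invertible, and we conclude $xy = 0 \in L^{*}_0(a)$. In both cases $xy \in L^{*}_0(a)$, so $L^{*}_0(a)$ is a subalgebra.

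I do not foresee a real obstacle: the hardest step is the commutation bookkeeping needed to justify the binomial identity for derivations, and this is routine once one notes that $L_a$, $L_a - c_iI$, and $L_a - c_jI$ all commute as linear operators. The only genuinely new point over the Lie-algebra analogue is ensuring no asymmetry of the Leibniz product spoils the derivation computation, but since $L_a$ is by definition a derivation of $L$ (as a left multiplication), this is automatic.
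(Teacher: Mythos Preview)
Your argument is correct and is essentially the same as the paper's: the paper also uses the derivation--binomial identity to show $(L_a-(c_i+c_j)I)^{d_i+d_j}(xy)=0$ and concludes $xy\in L^{*}_{c_i+c_j}$. Your write-up is in fact a bit more careful than the paper's sketch, since you explicitly handle the case where $c_i+c_j$ is not already among the roots $c_1,\dots,c_s$ (forcing $xy=0$), a point the paper leaves implicit.
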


\vspace{5mm}

\begin{lem}\label{lem4}
Let $A$ be an ideal of $L$ such that dim$(A)=1$. Then $L^2 \subseteq C_L(A)$.
\end{lem}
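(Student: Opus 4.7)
The plan is to fix a nonzero $a \in A$ so that $A = Fa$, and encode the left and right actions of $L$ on this $1$-dimensional ideal by linear functionals $\alpha, \beta \colon L \to F$ defined by $[x,a] = \alpha(x)\,a$ and $[a,x] = \beta(x)\,a$ (well defined because $A$ is an ideal). Showing $L^2 \subseteq C_L(A)$ is then equivalent to showing that both $\alpha$ and $\beta$ annihilate every product $[x,y]$.

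For $\alpha$, I would apply the defining Leibniz identity (left multiplication as a derivation) to $[x,[y,a]]$: expanding both sides via the definitions yields
\[
\alpha(x)\alpha(y)\,a \;=\; [x,[y,a]] \;=\; [[x,y],a] + [y,[x,a]] \;=\; \alpha([x,y])\,a + \alpha(x)\alpha(y)\,a,
\]
so $\alpha([x,y]) = 0$, and hence $\alpha$ vanishes on $L^2$ by linearity. This handles $[L^2, A] = 0$.

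The remaining direction $[A, L^2] = 0$ requires bringing $Leib(L)$ into play, since the derivation identity alone only controls left multiplication by elements of $L$. I would start from
\[
[y,a] + [a,y] \;=\; [y+a,\,y+a] - [y,y] - [a,a] \;\in\; Leib(L),
\]
which rewrites as $(\alpha(y) + \beta(y))\,a \in Leib(L)$, and then split on whether $a$ itself lies in $Leib(L)$. If $a \notin Leib(L)$, then $1$-dimensionality of $A$ gives $A \cap Leib(L) = 0$, forcing $\beta = -\alpha$, so $\beta|_{L^2} = 0$ by the first step. If instead $a \in Leib(L)$, then writing $a$ as a linear combination of squares and using that squares annihilate on the left in any left Leibniz algebra (a one-line consequence of the Leibniz identity applied with $x = y$) gives $[a,\cdot] = 0$, so $\beta \equiv 0$.

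The only real obstacle is this asymmetric direction; the computation for $\alpha$ is essentially forced by the derivation identity, whereas $\beta$ requires the $Leib(L)$ case split above. I expect that step to be the main substantive move in the proof.
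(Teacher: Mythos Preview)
Your proof is correct and follows essentially the same route as the paper: encode the two actions by scalars $\alpha,\beta$, use the Leibniz identity to show $\alpha$ vanishes on $L^2$, and handle the right action via the dichotomy $\beta=-\alpha$ or $\beta\equiv 0$ coming from whether $A\cap Leib(L)$ is trivial. The paper states this dichotomy more tersely (``since $A$ is a minimal ideal, either $ax=-\alpha_x a$ or $ax=0$ for all $x$'') and then computes $a(xy)$ directly from $a(xy)=(ax)y+x(ay)$ instead of reducing $\beta$ to $\alpha$, but this is a cosmetic difference in the same argument.
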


\begin{proof}
Let $x,y \in L$ and $a \in A$ with $xa=\alpha_x a$. Since $A$ is a minimal ideal of $L$, then either $ax=-\alpha_xa$ or $ax=0$ for all  $x \in L$. Then $(xy)a=x(ya)-y(xa)=x (\alpha_y a)-y (\alpha_x a)=\alpha_x \alpha_y a - \alpha_y \alpha_x a =0$ or $(xy)a=0.$ Therefore $L^2A=0$. Similarly,
$a(xy)=(ax)y+x(ay)=(-\alpha_x a)y+x (-\alpha_y a)=\alpha_x \alpha_y a - \alpha_y \alpha_x a =0$ or $a(xy)=0.$ Therefore $AL^2=0$. Thus $L^2 \subseteq C_L(A)$
\end{proof}

\vspace{5mm}

\begin{lem}\label{lem5}
$L^2$ is nilpotent if $L$ is supersolvable.
\end{lem}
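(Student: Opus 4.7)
The natural route is induction on $\dim L$. Since $L$ is supersolvable, it admits a chief series with one-dimensional factors, so $L$ contains a one-dimensional ideal $A$ sitting at the bottom of such a series. The quotient $L/A$ inherits supersolvability and has strictly smaller dimension, so by the induction hypothesis $(L/A)^{2}$ is nilpotent. Pulling this back to $L$ means that $(L^{2})^{k}\subseteq A$ for some $k\geq 1$, where $(L^{2})^{k}$ denotes the $k$-th term of the lower central series of $L^{2}$ (built using both left and right products, as is appropriate for Leibniz algebras).

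At this point I would invoke Lemma \ref{lem4}: because $\dim A = 1$, the derived ideal satisfies $L^{2}\subseteq C_{L}(A)$, which unpacks to $L^{2}\cdot A = 0$ and $A\cdot L^{2} = 0$. Combining this with $(L^{2})^{k}\subseteq A$ gives
$$(L^{2})^{k+1}\subseteq L^{2}\cdot A + A\cdot L^{2} = 0,$$
so $L^{2}$ is nilpotent, closing the induction. The base case $\dim L\leq 1$ is immediate since then $L^{2}=0$.

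The only delicate point is making sure that both the left and right actions of $L^{2}$ on $A$ are trivial, so that the lower central series of $L^{2}$ really does collapse after one extra step beyond reaching $A$. This is precisely what Lemma \ref{lem4} guarantees (it was proved for both sides simultaneously), and once that is in hand the argument is a routine induction with no further obstacle.
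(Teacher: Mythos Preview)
Your argument is correct and matches the paper's proof essentially step for step: induct on $\dim L$, pick a one-dimensional (minimal) ideal $A$, use Lemma~\ref{lem4} to get $L^{2}\subseteq C_L(A)$, apply the induction hypothesis to $L/A$ to force $(L^{2})^{k}\subseteq A$, and then kill one more step of the lower central series. Your version is in fact slightly more careful than the paper's in that you explicitly note both $L^{2}\cdot A=0$ and $A\cdot L^{2}=0$ are needed for the final step in the Leibniz setting.
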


\begin{proof}
Induct on the dimension of $L$. Let $A$ be a minimal ideal of $L$. Then $L^2 \subseteq C_L(A)$ by  Lemma 4. Now let $A \subseteq L^2$. By induction, $(L/A)^2$ is nilpotent because $(L/A)$ is supersolvable. But $(L/A)^2=(L^2+A)/A$. So there exists a $k$ such that $((L^2+A)/A)^k=0$. Therefore $(L^2+A)^k \subseteq A$ for some $k$. Thus $(L^2)^{k+1} \subseteq (L^2+A)^{k+1}=(L^2+A)(L^2+A)^k \subseteq (L^2+A)A=0$. Thus $L^2$ is nilpotent.
\end{proof}

\vspace{5mm}

\begin{thm}\label{thm4}
$L$ is supersolvable if and only if $L^2$ is nilpotent and $L^*_0(a)=L$ for all $a \in L$.
\end{thm}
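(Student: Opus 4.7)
The plan breaks into two directions.

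For the forward direction, suppose $L$ is supersolvable. Then $L^2$ is nilpotent by Lemma \ref{lem5}. For the condition $L^*_0(a) = L$, I would fix a chief series $0 = L_0 \subset L_1 \subset \cdots \subset L_n = L$ with one-dimensional quotients and choose a basis adapted to this flag. Each $L_a$ is then upper triangular in this basis, and the diagonal entries are the scalars by which $a$ acts on the one-dimensional quotients. These scalars lie in the base field, so the minimum polynomial of $L_a$ factors into linear factors, which is exactly the statement $L^*_0(a) = L$.

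For the converse, I would induct on $\dim L$. The hypothesis descends to $L/A$ for any ideal $A$: the minimum polynomial of the operator that $L_a$ induces on $L/A$ divides that of $L_a$, and $(L/A)^2$ remains nilpotent as a quotient of $L^2$. So if $A$ is a minimal ideal, $L/A$ is supersolvable by induction, and it suffices to show $\dim A = 1$; appending $0 \subset A$ to a chief series of $L/A$ then yields one for $L$.

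To force $\dim A = 1$, I would work in two stages. First, since $L^2$ is nilpotent it acts nilpotently on $A$ from the left, so by Engel the set $W = \{a \in A : L^2 a = 0\}$ is nonzero. Left $L$-invariance of $W$ is immediate from the derivation property of left multiplication: for $u \in L^2$, $a \in W$, and $x \in L$, $u(xa) = (ux)a + x(ua) = (ux)a = 0$ since $ux \in L^2$. For right-invariance I would invoke the standard dichotomy for a minimal Leibniz ideal --- either $A \subseteq Leib(L)$, so that $AL = 0$ makes right-invariance automatic, or $A \cap Leib(L) = 0$, in which case $ay + ya \in A \cap Leib(L) = 0$ forces $ay = -ya$ and reduces right-invariance of $W$ to left-invariance. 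Minimality of $A$ then gives $W = A$, i.e.\ $L^2 \cdot A = 0$. Second, $L/L^2$ is abelian and acts on $A$ by commuting left multiplications whose restrictions inherit the eigenvalues-in-$F$ property (their minimum polynomials divide those of the corresponding $L_a$). Simultaneous triangularization of this commuting family produces a common eigenvector $v \in A$; the line $Fv$ is left-invariant by construction, and by the same $Leib(L)$-dichotomy it is also right-invariant, hence a one-dimensional ideal of $L$ in $A$, forcing $\dim A = 1$ by minimality.

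The main obstacle is the left/right asymmetry of Leibniz multiplication. In the Lie case the right action is just the negative of the left action, so closure of $W$ and of $Fv$ under $L$ is automatic; here each right-invariance claim must be argued separately. The $Leib(L)$-dichotomy for a minimal ideal is the correct tool, and invoking it consistently at each place where right-invariance is needed --- once for $W$, once for the final one-dimensional ideal --- is where the technical weight of the Leibniz version of this theorem actually lies.
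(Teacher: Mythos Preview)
Your proof is correct and follows essentially the same architecture as the paper's: triangularize $L_a$ via a chief-series basis for the forward direction, and for the converse induct on $\dim L$, reduce to showing a minimal ideal $A$ is one-dimensional, kill the $L^2$-action on $A$, and then simultaneously triangularize the resulting commuting family using the $R_x = -L_x$ or $R_x = 0$ dichotomy. The only substantive difference is in how $L^2$ is shown to annihilate $A$: the paper observes that $AL^2 + L^2A \subseteq A$ and iterates (if this were all of $A$ one would contradict nilpotency of $L^2$), whereas you reach the same conclusion by applying Engel to the left $L^2$-action, taking the annihilated subspace $W$, and using the $Leib(L)$-dichotomy to promote $W$ to a two-sided ideal---a slightly longer but perfectly valid route.
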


\begin{proof}
Suppose $L$ is supersolvable. Then $L^2$ is nilpotent by Lemma 5. Let $v_1, \dots , v_n$ be a basis for $L$ such that $\langle v_1, \dots , v_s \rangle$ is an ideal in $L$ for each $s$. Then the characteristic polynomial for $L_a$ is the product of linear factors and the result holds.
\\
Suppose the conditions hold for $L$. Use induction on the dimension of $L$. It is sufficient to show that each minimal ideal has dimension one. Let $A$ be a minimal ideal of $L$. Then $AL^2+L^2A \trianglelefteq M$ and is in both $L^2$ and $A$. Either $AL^2+L^2A=0$ or $AL^2+L^2A=A$. If $AL^2+L^2A=A$, then A=$AL^2+L^2A=(AL^2+L^2A)L^2+L^2(AL^2+L^2A)$. Repeating the process shows that the term never becomes 0. This contradicts the fact that $L^2$ is nilpotent. Hence $AL^2+L^2A=0$ and $AL^2=0=L^2A$. Since $0=(xy)a=x(ya)-y(xa)$, it follows that $L_xL_y=L_yL_x$ on $A$. Since $A$ is a minimal ideal of $L$, either $R_x=-L_x$ for all $x \in L$ or $R_x=0$ for all $ x \in L$ when $R_x$ acts on $A$. Hence all $R_x , L_y$ commute on $A$. That is $R_xL_y=-L_xL_y=-L_yL_x=L_yR_x$ or $R_xL_y=0=L_yR_x$ and $R_xR_y=L_xL_y=L_yL_x=R_yR_x$ on $A$. Since $L^*_0(a)=L, L_a$ can be triangularized on $A$ as can $R_a$ since $R_a=-L_a$ or $R_a=0$. Since all $L_a, R_a$ can be triangularized on $A$ and they commute, they can be simultaneously triangularized on $A$. Hence there is a common eigenvalue $z$ where $A=\langle z \rangle$. Thus dim$(A)=1$ and induction on $L/A$ yields $L$ supersolvable.
\end{proof}

\vspace{5mm}

\begin{cor}\label{cor2}
Supersolvability is 2-recognizeable in the class of solvable Leibniz algebras.
\end{cor}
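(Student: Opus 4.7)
The plan is to reduce Corollary 2 to the internal characterization of supersolvability provided by Theorem 4. Recall that Theorem 4 asserts that $L$ is supersolvable if and only if (i) $L^2$ is nilpotent and (ii) $L_0^*(a) = L$ for every $a \in L$. So under the hypothesis that $L$ is solvable and every two-generated subalgebra of $L$ is supersolvable, I would verify (i) and (ii) separately.

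For condition (i), I would combine Lemma 5 with Corollary 1. Each two-generated subalgebra $H \subseteq L$ is supersolvable by hypothesis, so Lemma 5 gives that $H^2$ is nilpotent, i.e.\ $H$ is strongly solvable. Since $L$ itself is solvable and strong solvability is 2-recognizeable in the class of solvable Leibniz algebras (Corollary 1), this forces $L^2$ to be nilpotent.

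For condition (ii), I would argue by contradiction. Suppose $L_0^*(a) \neq L$ for some $a \in L$. Then the minimum polynomial of $L_a$ acting on $L$ has a nonlinear irreducible factor $\pi(x)$, and the corresponding primary component contains some nonzero $v$ with $\pi(L_a)^d v = 0$ for some $d$. Form the two-generated subalgebra $H = \langle a, v \rangle$. As a subalgebra of the solvable algebra $L$, $H$ is solvable, and by hypothesis it is supersolvable. Theorem 4 applied to $H$ then requires the minimum polynomial of $H_a = L_a|_H$ on $H$ to split into linear factors over the base field. However, $H$ is $L_a$-invariant and contains $v$, and $\pi(L_a|_H)^d v = \pi(L_a)^d v = 0$, so $v$ still lies in the $\pi$-primary component of $L_a|_H$ on $H$. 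Since $v \neq 0$ and $\pi$ is nonlinear, this contradicts $H_0^*(a) = H$. Hence $L_0^*(a) = L$ for all $a$, and Theorem 4 now yields that $L$ is supersolvable.

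The proof is essentially mechanical once Theorem 4 is in hand; the only point that deserves a careful sentence is the observation that the primary decomposition of $L_a$ on the ambient algebra restricts compatibly to any $L_a$-invariant subalgebra, so that the nonlinear factor $\pi$ survives the passage from $L$ to $H$. Everything else is a direct appeal to the previously established machinery.
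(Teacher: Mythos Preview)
Your proof is correct and follows essentially the same approach as the paper: both verify the two conditions of Theorem~\ref{thm4} by first invoking Lemma~\ref{lem5} and Corollary~\ref{cor1} to get $L^2$ nilpotent, and then passing to a two-generated subalgebra containing $a$ to deduce $L_0^*(a)=L$. The paper phrases the second step directly (for arbitrary $b$, the condition in $H=\langle a,b\rangle$ forces $b\in L_0^*(a)$), whereas you phrase the contrapositive by picking $v$ in a nonlinear primary component, but the content is the same.
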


\begin{proof}
Suppose that all two generated subalgebras are supersolvable. Then they are strongly solvable and thus so is $L$. Let $a \in L$. For any $b$ in $L$, let $H$ be the subalgebra generated by $a$ and $b$. Then $a$ satisfies the condition in the last theorem in $H$. This extends to $L$, $L^{*}_0(a)=L$. Hence $L$ is supersolvable.
\end{proof}

\vspace{5mm}

As in the last section, we get the usual extension.

\vspace{5mm}

\begin{thm}\label{thm5}
For fields of characteristic 0 or algebraically closed of characteristic greater than 5, supersolvability is 2-recognizeable.
\end{thm}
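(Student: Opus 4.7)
The plan is to follow exactly the same bootstrapping pattern that was used to deduce Theorem \ref{thm3} from Corollary \ref{cor1}, only now replacing strong solvability with supersolvability at each step. The input is a Leibniz algebra $L$ over a field of characteristic $0$ or an algebraically closed field of characteristic greater than $5$, every $2$-generated subalgebra of which is supersolvable.

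First I would observe that supersolvability implies solvability (each $2$-generated subalgebra has a chain of ideals with $1$-dimensional factors, hence is solvable). Thus the hypothesis of Theorem \ref{thm1} is met: every $2$-generated subalgebra of $L$ is solvable. Applying Theorem \ref{thm1}, over the fields in question, we conclude that $L$ itself is solvable.

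With $L$ now known to be solvable, I would invoke Corollary \ref{cor2}, which asserts precisely that, within the class of solvable Leibniz algebras, supersolvability is $2$-recognizeable. Since every $2$-generated subalgebra of $L$ is supersolvable by hypothesis, Corollary \ref{cor2} applies and yields that $L$ is supersolvable.

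There is no substantive obstacle here; the only point that needs care is the very first implication, namely that supersolvable implies solvable in the Leibniz setting, but this is immediate from the defining chain of ideals with $1$-dimensional quotients. Everything else is a direct chaining of Theorem \ref{thm1} and Corollary \ref{cor2}, and the field hypothesis enters solely through Theorem \ref{thm1}.
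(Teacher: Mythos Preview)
Your argument is correct and matches the paper's approach exactly: the paper merely writes ``As in the last section, we get the usual extension,'' which unpacks to precisely your chain---supersolvable two-generated subalgebras are solvable, so Theorem~\ref{thm1} makes $L$ solvable, and then Corollary~\ref{cor2} finishes. There is nothing to add.
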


\vspace{5mm}

We mention a result  that is in a different direction. It is the Leibniz algebra version of a result that holds in Lie and Malcev algebras as well as group theory and is useful in these structures.

\vspace{5mm}

\begin{thm}\label{thm6}
Let $\Phi(L)$ be the Frattini ideal of $L$ and $B$ be an ideal contained in $\Phi(L)$. If $L/B$ is supersolvable, then $L$ is supersolvable. 
\end{thm}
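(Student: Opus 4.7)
The plan is to induct on $\dim B$; the base case $B=0$ is immediate. For the inductive step, pick a minimal ideal $A$ of $L$ with $A\subseteq B$. Every maximal subalgebra of $L$ contains $\Phi(L)\supseteq A$, so the correspondence theorem gives $\Phi(L/A)=\Phi(L)/A$; hence $B/A\subseteq \Phi(L/A)$ and $(L/A)/(B/A)\cong L/B$ is supersolvable. The inductive hypothesis applied to the pair $(L/A,B/A)$, which has $\dim(B/A)<\dim B$, yields $L/A$ supersolvable. This reduces the problem to the case in which $B=A$ is itself a minimal ideal of $L$ contained in $\Phi(L)$ and $L/A$ is supersolvable.

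Next I would verify the two conditions of Theorem \ref{thm4} for $L$. Applying Lemma \ref{lem5} to the supersolvable $L/A$, $(L^{2}+A)/A$ is nilpotent, so $(L^{2}+A)^{k}\subseteq A$ for some $k$. Since a minimal ideal contained in $\Phi(L)$ is abelian (a standard Frattini fact for Leibniz algebras from [5]), $A^{2}=0$; one further left-multiplication then lands in $A^{2}=0$, so $L^{2}$ is nilpotent. For the triangularization condition, the key step is to show $\dim A=1$: the supersolvable quotient $L/A$ acts on the minimal $L$-module $A$ by left multiplications, and the Leibniz analogue of Lie's theorem in the supersolvable setting furnishes a common eigenvector in $A$; its span is an $L$-submodule of $A$, which by minimality equals $A$, forcing $\dim A=1$. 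With this in hand, for any $a\in L$ the operator $L_{a}$ acts on $A$ as a scalar and, by Theorem \ref{thm4} applied to the supersolvable $L/A$, its minimum polynomial on $L/A$ splits into linear factors; combining, its minimum polynomial on $L$ also splits into linear factors, i.e.\ $L^{*}_{0}(a)=L$. Theorem \ref{thm4} then delivers $L$ supersolvable.

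The main obstacle is the common-eigenvector step, i.e.\ the Leibniz supersolvable analogue of Lie's theorem applied to the action of $L/A$ on the minimal ideal $A$. This is the place where the field hypothesis implicit in the paper's supersolvability machinery is needed, and the place where one must be careful that the two possibilities for how $L$ and its right multiplications can act on a minimal ideal (as in the proof of Theorem \ref{thm4}) do not obstruct the common-eigenvector conclusion. Everything else in the argument is a mechanical combination of the Frattini-quotient reduction, Lemma \ref{lem5}, and Theorem \ref{thm4} applied on both sides of the exact sequence $0\to A\to L\to L/A\to 0$.
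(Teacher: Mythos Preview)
Your reduction by induction on $\dim B$ is correct but unnecessary, and the part you flag as the ``main obstacle'' is a genuine gap. Invoking a ``Leibniz supersolvable analogue of Lie's theorem'' to produce a common eigenvector in $A$ would indeed require a field hypothesis, but Theorem~\ref{thm6} carries none: it holds over an arbitrary field. So the route through $\dim A=1$ via a Lie-type eigenvector argument is the wrong one.

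The paper's proof bypasses the eigenvector issue entirely with a direct Frattini argument. From $L/B$ supersolvable and Theorem~\ref{thm4} one has $(L/B)^{*}_{0}(\bar{x})=L/B$, i.e.\ $(L/B)^{*}_{1}(\bar{x})=0$; since $B$ is $L_{x}$-invariant, the primary decomposition is compatible with the quotient and this forces $L^{*}_{1}(x)\subseteq B\subseteq\Phi(L)$. Hence $L=L^{*}_{0}(x)+\Phi(L)$. Now Lemma~\ref{lem3} says $L^{*}_{0}(x)$ is a \emph{subalgebra}, and the defining property of the Frattini ideal (its elements are non-generators) gives $L^{*}_{0}(x)=L$. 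Together with $L^{2}$ nilpotent (which the paper gets in one line from the Frattini theory of~[5], without first reducing $B$ to a minimal ideal), Theorem~\ref{thm4} finishes.

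Note that this same Frattini trick is exactly what would rescue your approach: if $\dim A>1$, then for a suitable $x$ one has $A\subseteq L^{*}_{1}(x)$ while $L^{*}_{1}(x)\subseteq A$ from the supersolvable quotient, so $L^{*}_{0}(x)$ is a subalgebra complementing $A\subseteq\Phi(L)$, a contradiction. But once you see this, the induction and the detour through $\dim A=1$ are superfluous; the paper's argument applies to $B$ directly and is both shorter and field-independent.
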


\begin{proof}
Let $x \in L$. Since $L/B=\bar{L}$ is supersolvable, $\bar{L}^2$ is nilpotent by Lemma \ref{lem5}. Hence $L^2/(L^2 \cap B)$ is nilpotent and $L^2$ is nilpotent. Since $B$ is an ideal in $\Phi(L)$, $L^*_1(x) \subseteq B$ for all $ x \in B$. Since $\bar{L}$ is supersolvable, $L^*_1(x) \subseteq B \subseteq \Phi(L)$. Hence $L^*_0(x) + \Phi(L)=L$  for all $x \in L$.  $L^*_0(x)$ is a subalgebra of $L$ by Lemma \ref{lem3}, which forces $L^*_0(x)=L$. Therefore $L$ is supersolvable by Theorem \ref{thm4}.
\end{proof}

\vspace{5mm}

V. A 3-RECOGNIZEABLE CLASS

\vspace{5mm}

The class of abelian by nilpotent Lie algebras is not 2-recognizeable as an example is provided in [12]. The example is the split extension of a Heisenberg Lie algebra, $L$, spanned by $x,y$ and $z$ with $[x,y]=z$, and a one dimensional space spanned by a semi-simple derivation $D$ of $L$ with $D(x)=x$, $D(y)=y$ and $D(z)=2z$. 

\vspace{5mm}
This class of Lie algebras is 3-recognizeable. The same result holds in Leibniz algebras
as in Lie algebras, define $d_k(x,y,z)= (L_z^k(x))(L_z^k(y))$. 

\vspace{5mm}

Let $L$ be a Leibniz algebra such that $d_k(x,y,z)=0$ for all $x,y,z \in L$ and almost all $k$. Then $(L_1(z))(L_1(z))=0$ and $L_1(z)$ is an abelian ideal in $L$.

\vspace{5mm}

\begin{thm}\label{thm7}
Over an infinite field, the following are equivalent:
\begin{enumerate}
\item
$L$ is abelian by nilpotent
\item
$d_k(x,y,z)=0$ for all $x,y,z \in L$ and all $k > \mbox{dim }L$.
\end{enumerate}
\end{thm}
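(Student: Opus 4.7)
The plan is to treat the two directions separately.

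For $(1) \Rightarrow (2)$, I would suppose $L$ has an abelian ideal $A$ with $L/A$ nilpotent. Then every $L_z$ acts nilpotently on $L/A$ with index at most $\dim(L/A) \leq \dim L$, so for $k > \dim L$ both $L_z^k(x)$ and $L_z^k(y)$ lie in $A$. Because $A^2 = 0$, this forces $d_k(x,y,z) = 0$.

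For $(2) \Rightarrow (1)$ I would exploit the observation recorded just before the theorem: $L_1(z)$ is an abelian ideal of $L$ for every $z \in L$. Choose $z_0$ maximizing $\dim L_1(z_0)$ and set $A := L_1(z_0)$, an abelian ideal of $L$. It then suffices, by Engel's theorem for Leibniz algebras, to prove $L_1(w) \subseteq A$ for every $w \in L$, since this containment forces $L_{\bar w}$ to act nilpotently on $L/A$ and hence makes $L/A$ nilpotent.

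To obtain the containment, fix $w$ and study the pencil $y_s := sz_0 + w$ with $L_{y_s} = sL_{z_0} + L_w$. Both $L_{z_0}$ and $L_w$ preserve the ideal $A$, and $L_{z_0}$ is invertible on $A = L_1(z_0)$. Therefore $\det(L_{y_s}|_A)$ is a polynomial in $s$ of degree $\dim A$ with nonzero leading coefficient, and vanishes on at most finitely many $s$. For every other $s$, $L_{y_s}|_A$ is invertible, giving $A \subseteq L_1(y_s)$, and the maximality of $\dim L_1(z_0)$ forces $L_1(y_s) = A$. Taking $N > \dim L$, we have $L_{y_s}^N(L) = L_1(y_s)$, so for each $v \in L$ the vector $L_{y_s}^N(v)$ lies in $A$ for generic $s$; since $L_{y_s}^N(v)$ is a polynomial in $s$, its image in $L/A$ is a polynomial that vanishes on a Zariski dense subset of the field, hence identically over the infinite base field. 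Setting $s = 0$ yields $L_w^N(v) \in A$ for all $v$, so $L_1(w) \subseteq A$, as required.

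The hard part is the Zariski density step at the end: one must use the maximality of $\dim L_1(z_0)$ to pin down the generic Fitting-1 part of $L_{y_s}$ as exactly $A$ (not merely containing $A$), and the infinite-field hypothesis to upgrade generic vanishing to identical vanishing of the polynomial $L_{y_s}^N(v) \bmod A$.
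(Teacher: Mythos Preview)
Your argument is correct, but the paper proceeds quite differently for $(2)\Rightarrow(1)$. Instead of choosing a $z_0$ with $\dim L_1(z_0)$ maximal and then running a Zariski-density argument on the pencil $sL_{z_0}+L_w$, the paper invokes the existence of a Cartan subalgebra $H$ of $L$ and the fact (from Barnes, reference~[3]) that over an infinite field $H=L_0(z)$ for some $z\in H$; since $H$ is nilpotent and the hypothesis makes $L_1(z)$ an abelian ideal, $L/L_1(z)\cong L_0(z)=H$ is nilpotent and the conclusion is immediate. The paper's route is shorter because it outsources the hard work to the Cartan-subalgebra theory for Leibniz algebras, whereas your approach is essentially a direct, self-contained reproof of the regularity ingredient: the polynomial argument you use to show $L_1(w)\subseteq L_1(z_0)$ for all $w$ is precisely the kind of argument that underlies the proof that $L_0(z_0)$ is nilpotent when $z_0$ is regular. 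Your method has the advantage of avoiding any external structural citations beyond Engel's theorem, at the cost of a few more lines.
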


\begin{proof}
(1) clearly implies (2). Assume (2). Let $L$ be a Leibniz algebra such that $d_k(x,y,z)=0$ for all $x,y,z \in L$ and almost all $k$. 
Let $H$ be a Cartan subalgebra of $L$. Then $H$ is the Fitting null component, $L_0(z)$ of $L_z$ for some $z \in H$ [ 3, Theorem 6.5] and $H$ is nilpotent. Furthermore, the condition gives that $(L_1(z))(L_1(z))=0$ and $L_1(z)$ is an abelian ideal in $L$. Hence the result holds.
\end{proof}

\vspace{5mm}

\begin{thm}\label{thm8}
The class of abelian by nilpotent Leibniz algebras over an infinite field is 3-recognizeable.
\end{thm}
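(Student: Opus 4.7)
The plan is to reduce 3-recognizeability directly to Theorem \ref{thm7}. First I would fix an arbitrary triple $x,y,z \in L$ and form the subalgebra $K = \langle x,y,z \rangle$, which is 3-generated. By the standing hypothesis, $K$ is abelian by nilpotent, so the $(1)\Rightarrow(2)$ direction of Theorem \ref{thm7} applied inside $K$ gives $d_k(x,y,z) = 0$ for every $k > \dim K$.

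The next step is to observe that $d_k(x,y,z) = (L_z^k(x))(L_z^k(y))$ is intrinsic to the triple: it takes the same value whether computed in the subalgebra $K$ or in the ambient algebra $L$, because $K$ is closed under the product and contains $x,y,z$. Hence the vanishing persists in $L$. Since $\dim K \leq \dim L$, we in fact obtain $d_k(x,y,z) = 0$ for all $k > \dim L$, a bound uniform in $x,y,z$.

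Finally I would invoke the $(2)\Rightarrow(1)$ direction of Theorem \ref{thm7} applied to $L$ itself: the uniform vanishing just established is exactly condition $(2)$, so $L$ is abelian by nilpotent. I do not foresee a genuine obstacle; the only point worth flagging is that Theorem \ref{thm7} is formulated with the uniform bound $k > \dim L$ rather than the pointwise ``almost all $k$'' condition that naturally comes out of the 3-generated subalgebras, and this is resolved cleanly by the inequality $\dim \langle x,y,z\rangle \leq \dim L$ that holds for every triple in the finite-dimensional algebra $L$.
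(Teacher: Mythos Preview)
Your proposal is correct and follows essentially the same route as the paper's proof: pass to the 3-generated subalgebra, use Theorem~\ref{thm7} to obtain condition~(2) there, and then apply Theorem~\ref{thm7} to $L$. You spell out two points the paper leaves implicit---that $d_k(x,y,z)$ is computed the same way in $K$ as in $L$, and that $\dim K \leq \dim L$ upgrades the subalgebra bound to the uniform bound required by condition~(2)---but the underlying argument is the same.
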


\begin{proof}
Suppose that all three generated subalgebras of $L$ are abelian by nilpotent and let $H$ be generated by $x,y$ and $z$. Condition (2) of Theorem \ref{thm7} holds for this $x,y$ and $z$. Therefore condition (2) holds in general in $L$ and $L$ is abelian by nilpotent.
\end{proof}

\vspace{5mm}

\vspace{5mm}

REFERENCES

\vspace{5mm}

1. Sh.A. Ayupov,  B.A.Omirov: On Leibniz algebras. Algebra and Operator Theory . Proceedings of the Colloquium in Tashkent, Kluwer (1998) 1-12

2. D.W. Barnes, M.L.Newell: Some theorems on saturated formations of soluble Lie algebras. Math. Z. 115 (1970) 179-187

3. D.W.Barnes: Some theorems on Leibniz algebras. Comm. Algebra 39, (2011) 2463-2472

4. D.W. Barnes: Schunk classes of soluble Leibniz algebras, Comm. Algebra 41, (2013) 4046-4065.

5. C. Batten Ray, L. Bosko-Dunbar, A. Hedges, J.T. Hird, K. Stagg, E.Stitzinger: A Frattini theory for Leibniz algebras, Comm. Alg. 41, (2013) 1547-1557

6. L. Bosko, A.  Hedges, J.T. Hird, N. Schwartz,, K. Stagg: Jacobson's refinement of Engel's theorem for Leibniz algebras. Involve 4, (2011) 293-296.

7. K. Bowman, D. Towers, V. Varea: Two generator subalgebras of Lie algebras, Linear and Multilinear Algebra 55, (2007) 429-438

8. R. Brandl:  Zur Theorie der Untergruppenabgeschlossenen Formationen: Endliche Variet�ten, J. Algebra 73 (1981) 1-22

9. R. Brandl:  On finite abelian by nilpotent groups. J. Algebra 86 (1984) 439-44410. 

10. A. Elduque:  On supersolvable Malcev algebras, Comm. in Algebra 14 (1986), 311-321

11. J. Loday:  Une version non commutative des algebres de Lie: les algebres do Leibniz. Enseign Math. 39, 269-293 (1993)

12. K. Moneyhun, E. Stitzinger: Some finite varieties of Lie algebras, J. Algebra 143, (1991) 173-178.

13. E. Stitzinger: Supersolvable Malcev algebras, J. Algebra 103 (1986) 69-79.

\end{document}